\documentclass[11pt]{amsart}
\usepackage[margin=1.15in]{geometry}
\usepackage{amsmath,amssymb,amsthm,mathtools,mathrsfs,enumitem}
\usepackage{tikz-cd}
\usepackage[colorlinks=true,linkcolor=blue,citecolor=blue,urlcolor=blue]{hyperref}
\usepackage{cleveref}

\numberwithin{equation}{section}
\newtheorem{theorem}{Theorem}[section]

\newtheorem{lemma}[theorem]{Lemma}

\theoremstyle{definition}

\theoremstyle{remark}
\newtheorem{remark}[theorem]{Remark}

\newcommand{\Spec}{\operatorname{Spec}}

\newcommand{\im}{\operatorname{im}}

\newcommand{\bbA}{\mathbb A}

\newcommand{\cX}{\mathscr X}

\newcommand{\cU}{\mathscr U}
\newcommand{\cW}{\mathscr W}

\newcommand{\cZ}{\mathscr Z}

\newcommand{\SL}{\operatorname{SL}}

\newcommand{\id}{\mathrm{id}}
\newcommand{\schim}{\mathrm{Im_{sch}}}

\title[Open substacks with proper moduli spaces]{Finiteness of open substacks with proper moduli spaces}

\author{Xucheng Zhang}
\address{School of Mathematical Sciences, Beihang University, Beijing 102206, China}
\address{Email: xuchengz@buaa.edu.cn}

\date{}
\subjclass[2020]{Primary 14D23; Secondary 14D22, 14L24}
\keywords{algebraic stacks, proper moduli spaces, open substacks, universal closedness, finite stratifications}
\begin{document}

\begin{abstract}
For an algebraic stack of finite type, we construct a finite stratification of an atlas such that every universally closed open substack pulls back to a union of strata. As a result, there are only finitely many open substacks admitting a proper adequate moduli space.
\end{abstract}

\maketitle

\section{Introduction}
The choice of a stability condition is often the decisive step in constructing a proper moduli space. In general, a moduli stack $\cX$ may contain several open substacks with proper moduli spaces, for instance, by varying ample line bundles on $\cX$. In some cases, all such open substacks can be identified. For example, for the moduli stack of coherent sheaves on a curve, there is a unique open substack with a universally closed moduli space \cite{FHWZ25}. For a smooth quotient stack arising from a torus action, the resolution of the moment measure conjecture \cite{Heinloth-Zhang} classifies all opens with a proper good quotient; in particular, there are finitely many of them. The stack of all curves has many open substacks with proper coarse or good moduli spaces \cite{MR3044128}. Recently, for the moduli stack $\mathcal{G}_{1,n}$ parametrising Gorenstein, reduced curves of arithmetic genus one, all open substacks with proper good moduli spaces are classified \cite{BDLP26}.

In this short note, we show that such opens are always finite in number under very mild assumptions.
\begin{theorem}\label{thm:overview}
Let $S$ be a noetherian algebraic space and let $\cX \to S$ be an algebraic stack of finite type with qcqs diagonal. Then there are only finitely many open substacks $\cU \subseteq \cX$ for which $\cU \to S$ is universally closed. In particular, there are only finitely many open substacks of $\cX$ with a universally closed adequate moduli space.
\end{theorem}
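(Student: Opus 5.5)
The plan follows the abstract: build a finite stratification of a smooth atlas such that every universally closed open substack pulls back to a union of strata. Finiteness is then formal.

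Fix a smooth surjection $p\colon X \to \cX$ with $X$ an affine (or quasi-compact) scheme of finite type over $S$; this is possible because $\cX$ is of finite type over the noetherian space $S$. Every open substack $\cU \subseteq \cX$ is determined by its preimage $p^{-1}(\cU)$, an open subset of the noetherian space $|X|$. It therefore suffices to produce a finite partition $|X| = \bigsqcup_{i=1}^N Z_i$ into locally closed (constructible) subsets with the following property: whenever $\cU\to S$ is universally closed, $p^{-1}(\cU)$ is a union of some of the $Z_i$. This leaves at most $2^N$ possible $\cU$.

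The key geometric input is the following. If $\cU$ is universally closed over $S$ and $x \in |\cU|$, then every point of $|\cX|$ that specializes to $x$ is in $\cU$, since $\cU$ is open. Conversely, we want the limits of points of $\cU$ to lie in $\cU$. More precisely, take a DVR $R$ and a map $\Spec R \to S$ such that the generic point lands in $\cU$. The existence part of the valuative criterion for universally closed morphisms of stacks (after an extension of $R$) gives a limit in $\cU$. Since $\cU$ is open in $\cX$, this limit is a specialization in $\cX$ of the original point. So $|\cU|$ is closed under these "valuative specializations lying over $S$," and closed under generization. The plan is to use this to show that $|\cU|$ is a union of equivalence classes for a relation generated by such specializations, restricted to a fixed finite stratification.

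To construct the stratification, I would stratify $X$ by constructible pieces on which the map $X\to S$ and the "closure behaviour" over $S$ is uniform. Concretely, I would proceed by noetherian induction on closed subsets of $|X|$. Choose a dense open $W$ of an irreducible component and shrink it so that its image in $S$ is locally closed and the fibres behave uniformly (generic flatness, generic constructibility of images of $p$-saturations). For any two points $w,w'$ of $W$ lying in a common stratum, one must then show: $w \in p^{-1}(\cU) \iff w' \in p^{-1}(\cU)$. Alternatively, more modestly, $p^{-1}(\cU)\cap W$ is either all of $W$ or empty. Since $p^{-1}(\cU)$ is open, its intersection with the irreducible $W$ is either empty or dense in $W$. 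The question is whether a point of $W$ outside $p^{-1}(\cU)$ can be reached from the generic point. Such a point is a specialization of a point of $\cU$ over a DVR. By universal closedness, that specialization can be realized inside $\cU$ over the same trait in $S$. The obstruction is that the limit in $\cU$ may differ from the given point, because of the non-separatedness of the stack. This is where the stratification must be chosen carefully, for instance by refining so that each stratum maps into a single "fiber over a locally closed piece of $S$" and is contained in a single closure class.

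I expect the main obstacle to be exactly this non-uniqueness of limits. A universally closed open substack need not contain all limits, only some limit, so being contained in $\cU$ is not obviously constant on naive strata. My first attempt would be to use the qcqs diagonal and noetherian hypotheses to obtain a finite "specialization relation" on the strata: a stratum $Z_j$ is in the closure of $Z_i$ over $S$ if some point of $Z_i$ specializes into $Z_j$, and this relation is constructible after refinement by Chevalley. Then $p^{-1}(\cU)$ is open, hence closed under generization along this relation. Universal closedness forces the image of $\cU\cap\overline{\{z\}}$ in $S$ to be closed for every point $z$, applied after base change to the closure of the image of $z$ in $S$. This forces $\cU$ to meet each relevant stratum in all-or-nothing fashion, once each stratum is taken irreducible with constant image behaviour over $S$.

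The final claim then follows because an adequate moduli space morphism $\cU \to U$ is universally closed. Hence if $U \to S$ is universally closed, so is $\cU \to S$, and the first part applies.
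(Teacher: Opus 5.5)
There is a genuine gap, and it sits at the step you yourself flag as the main obstacle. In your last paragraph you replace universal closedness by consequences stated only in terms of the topology of $|\cX|$ and images in $S$: stability of $|\cU|$ under generization, a specialization relation among strata, and closedness in $S$ of the image of $\cU\cap\overline{\{z\}}$. These properties cannot give the all-or-nothing conclusion.

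Take $S=\Spec k$, $\cX=X=\mathbb{P}^1_k$, $p=\id$, with the single stratum $X$, which is irreducible with constant image over $S$. Every open subset, for example $\bbA^1_k$, has all of these properties, since any image in a one-point space is closed. Yet $\bbA^1_k$ is not a union of strata. Since there are infinitely many such opens, no finite stratification can be extracted from these properties alone.

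Your valuative input is strong enough in principle. But once you collapse it to a specialization relation on $|\cX|$, you forget the trait and the $K$-point that actually detect non-closedness: the $k(t)$-point $1/t$ of $\bbA^1_k$ has limit $\infty$. The claim that the relation ``forces'' all-or-nothing behaviour is asserted, not argued.

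The missing idea is to apply universal closedness after base change to the atlas itself, i.e.\ to $\cU\times_S X_i\to X_i$, and to pair it with a family of point closures over each stratum.

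The paper stratifies $X$ into integral pieces $X_i$ over which the scheme-theoretic image $\cZ_i\subseteq\cX\times_S X_i$ of $(p|_{X_i},\id_{X_i})$ is flat and commutes with arbitrary base change. Consequently, for every $x\in X_i$, the fibre $(\cZ_i)_x$ has underlying set $\overline{\{p_x\}}\subseteq|\cX_{\kappa(x)}|$. This step uses the base-change stratification for scheme-theoretic images, generic flatness, and the qcqs diagonal to make $(p,\id)$ quasi-compact.

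Then consider the image of $\cZ_i\cap(\cU\times_S X_i)\to X_i$:
\begin{itemize}
\item it is closed, by universal closedness;
\item it is open, by flatness;
\item it equals $p^{-1}(\cU)\cap X_i$, because the open substack $\cU_{\kappa(x)}$ meets $\overline{\{p_x\}}$ if and only if it contains $p_x$.
\end{itemize}
Irreducibility of $X_i$ then finishes the argument.

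This also dissolves your worry about non-unique limits. You never need the limit to be the given point, only to lie in the closure of that point inside its own fibre $\cX_{\kappa(x)}$. The stratification is chosen precisely so that this holds uniformly along each stratum.

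Your counting argument ($2^N$ unions of strata) and the final reduction via universal closedness of adequate moduli space morphisms are fine.
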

\subsection{Idea of the proof}
The proof of \Cref{thm:overview} rests on a simple observation. Let $X$ be a connected $S$-scheme, and let $\cZ \subseteq \cX \times_S X$ be a closed substack, flat over $X$. For an open substack $\cU \subseteq \cX$ that is universally closed over $S$, the locus
\[
\{x \in X: \cZ_x \cap \cU_{\kappa(x)} \neq \varnothing\}
\]
is open by flatness and closed as $\cZ \cap (\cU \times_S X)$ is closed in $\cU \times_S X$. Thus this locus is either empty or all of $X$. \Cref{lem:closure-stratification} organises the point closures into such families over a finite stratification $X=\sqcup X_i$ of any fixed atlas $p: X \to \cX$. Since an open substack meets a point closure if and only if it contains the point, this observation shows that $p^{-1}(\cU)$ is a union of strata, giving the desired finiteness.
\begin{remark}
For reductive group acting on complex varieties, finiteness of the opens admitting proper good quotients follows from Bia{\l}ynicki-Birula's result \cite{BB98}; see \Cref{lem:finite-proper-good-quotients}. However, even for locally reductive stacks, an \'{e}tale reduction to quotient charts does not by itself suffice: the existence of a moduli space need not pass along an \'{e}tale morphism. Our approach here is different.
\end{remark}
\subsection{What's next}
Nat\"{u}rlich, the next thing is to determine which unions of strata define open substacks with proper moduli spaces, and it will give a full classification of such opens. This will be addressed in the forthcoming paper \cite{generalized-mm}. Our strategy is check which unions are $\Theta$-reductive, S-complete and satisfy the existence part of the valuative criterion for properness \cite{AHLH23}. Such a union is encoded by a choice function $m: \{X_i\} \to \{0,1\}$, and we will translate these criterion on choice functions. In particular, for a quotient stack $[X/G]$, the natural atlas $X \to [X/G]$ allows these conditions to be studied on open $G$-invariant unions of strata in $X$ itself, which can be seen as a generalization of the torus case \cite{Heinloth-Zhang}.

\section{The key lemma}
The following lemma is a relative, stack-theoretic analogue of the classical construction of flat families of orbit closures; compare \cite[\S1.1]{Thaddeus99} and \cite[Lem. 7.1 and the subsequent paragraph]{MR3676045}. We include a proof for the stated generality.
\begin{lemma}[Flat families of point closures]\label{lem:closure-stratification}
Let $S$ be a noetherian scheme and let $\cX \to S$ be an algebraic stack of finite type with qcqs diagonal. Let $p: X \to \cX$ be an atlas, where $X$ is a scheme of finite type over $S$. Then there exist a finite stratification $X=\sqcup_i X_i$ by integral locally closed subschemes, and closed substacks
\[
\cZ_i \subseteq \cX \times_S X_i
\]
such that $\cZ_i \to X_i$ is flat and of finite presentation,
and for every point $x: \Spec K \to X_i$
\[
(\cZ_i)_x=\schim(p_x) \subseteq \cX_K,
\]
where $p_x: \Spec K \to \cX_K$ is the morphism induced by $p \circ x$.
\end{lemma}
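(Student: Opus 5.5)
We argue by noetherian induction on $X$, so it suffices to show: for every integral closed subscheme $Y \subseteq X$ there is a dense open $V \subseteq Y$ together with a closed substack $\cZ_V \subseteq \cX \times_S V$, flat and of finite presentation over $V$, whose fibre over every point $x$ of $V$ is $\schim(p_x)$. Given this, we take $X_1 = V$ for $Y = X_{\mathrm{red}}$ (or an irreducible component), then repeat on the complement; noetherianity makes the process stop after finitely many steps, and every stratum is integral.

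To build $\cZ_V$, consider the morphism $q = (p|_Y, \iota): Y \to \cX \times_S Y$, the graph of $p|_Y$, where $\iota$ is the inclusion. It is a morphism over $Y$. Its scheme-theoretic image $\cZ = \schim(q)$ is a closed substack of $\cX \times_S Y$. The morphism $q$ is quasi-compact, and in fact of finite type, because $\cX$ has qcqs diagonal. By generic flatness (for stacks, apply it on a smooth atlas of $\cZ$ over the integral noetherian $Y$), there is a dense open $V_0 \subseteq Y$ over which $\cZ$ is flat and of finite presentation.

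The main obstacle is that scheme-theoretic image need not commute with base change to fibres. We need $\cZ_x = \schim(q_x)$, where $q_x : \Spec \kappa(x) \to \cX_{\kappa(x)}$ is $p_x$; for non-closed or field-extension points $x:\Spec K\to X_i$ we then use that scheme-theoretic image of a quasi-compact morphism commutes with flat base change $\Spec K \to \Spec \kappa(x)$. Flat base change alone does not handle fibres. The strategy is the following.

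First, at the generic point $\eta$ of $Y$, base change along the flat morphism $\Spec \kappa(\eta) \to Y$ gives $\cZ_\eta = \schim(q_\eta)$.

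Second, shrink $V$ so that the formation of the image commutes with all base changes. Pass to a smooth atlas $W \to \cX$ and work with $q$ pulled back to $W \times_S Y$, which is quasi-compact quasi-separated. Write $\mathcal{O}_{\cZ} \hookrightarrow q_*\mathcal{O}_Y$ (after pullback to $W$). Shrinking $V$, generic flatness applies to both $\cZ$ and the cokernel sheaf, and over such a $V$ the injection $\mathcal{O}_{\cZ} \to q_*\mathcal{O}$ stays injective on fibres. The cokernel is flat, so Tor vanishes. Also $q_*$ commutes with base change up to this generic shrinking, using the finite type of $q$ and generic freeness/cohomology and base change for the qcqs pushforward, as in the EGA IV generic-flatness-of-images arguments. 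This yields that $\cZ_x \to \cX_{\kappa(x)}$ is a closed substack through which $q_x$ factors, and that the ideal kills exactly the kernel on fibres, i.e. $\cZ_x=\schim(q_x)$.

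Alternatively, as a fallback for this second step, one can use the constructibility of the dimension and Hilbert-type invariants together with the fact that $\cZ_x \supseteq \schim(q_x)$ always holds. Both are then closed, flat and have the same generic behaviour. Reducedness and irreducibility of $\schim(q_x)$ (the image of a field is integral) allow a shrinking to where the fibres of $\cZ$ are geometrically integral-compatible, which forces equality.

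Finite presentation follows from $S$ noetherian and finite type. This completes the inductive step.
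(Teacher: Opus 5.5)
\textbf{Verdict: the overall plan is the paper's, but the central base-change step is not proved; the tools you cite do not apply there.}

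Your plan matches the paper's proof. You take the graph of $p$ over a stratum and let $\cZ$ be its scheme-theoretic image. You shrink so that $\cZ$ is flat and its formation commutes with base change, check this on an atlas, and pass from $\kappa(x)$ to $K$ by flat base change.

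The gap is in the one step with real content: that after shrinking, formation of the image commutes with passage to fibres.
\begin{itemize}
\item Cohomology and base change is a statement about proper morphisms and coherent sheaves. Here $q\colon W\times_{\cX}Y\to W\times_S Y$ is only quasi-compact and quasi-separated; it is a base change of the diagonal, so it need not be affine. Moreover $q_*\mathcal{O}$ is merely quasi-coherent (for $\cX=\mathrm{B}G$ it is $k[G]$). So that theorem gives nothing about base change of $q_*\mathcal{O}$.
\item Generic flatness and generic freeness concern finite modules over finite type algebras. The cokernel of $\mathcal{O}_{\cZ}\to q_*\mathcal{O}$ is not of finite type over $\cZ$. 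Its generic flatness is exactly what your Tor argument needs, and it is left unproved.
\item The fallback does not close the gap. To get equality from $\cZ_x\supseteq\schim(q_x)$ you would need $\cZ_x$ to be integral and $\dim\overline{\{p_x\}}$ to be generically constant. You prove neither.
\end{itemize}

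The paper does not prove this step from scratch. It quotes Stacks Tag 0H41: over a finite stratification of the base, formation of the scheme-theoretic image of a morphism of schemes of finite presentation commutes with arbitrary base change. It applies this to $V\to X\times_S X$, where $V$ is an \'etale cover of $X\times_{\cX}X$ by a scheme. It then transports the result to $\cZ_i$, using that images are unchanged by precomposition with a smooth surjection and commute with flat base change.

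You can also repair your own route directly.
\begin{enumerate}
\item Replace the source by finitely many affine schemes, \'etale and jointly surjective over it, and work on affine opens of the target. This changes no image, before or after any base change. It also makes $q_*$ commute with base change tautologically.
\item You are then reduced to finite type algebras $A'\subseteq B$ over a noetherian domain $R$, where $A'$ is the coordinate ring of the image. You must show $B/A'$ is flat over some $R_f$.
\item Filter $B$ by degree in finitely many $A'$-algebra generators, $F_0=A'\subseteq F_1\subseteq\cdots$. Then $\mathrm{gr}\,B$ is a finite type $R$-algebra, so by generic freeness its graded pieces are flat over some $R_f$. Hence every $F_n/F_0$ is flat over $R_f$, and so is $B/A'=\varinjlim F_n/F_0$.
\item Consequently $A'\otimes_R R'\to B\otimes_R R'$ stays injective for every $R_f$-algebra $R'$. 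This is your Tor argument made rigorous, and it gives that the image commutes with arbitrary base change over $D(f)$.
\end{enumerate}
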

\begin{proof}
Let $R:=X \times_{\cX} X$ with projections $s,t: R \rightrightarrows X$. Consider
the Cartesian diagram
\[
\begin{tikzcd}
R \ar[rr,"{(s,t)}"] \ar[d,"t"'] & & X \times_S X \ar[d,"{p \times \id_X}"] \\
X \ar[rr,"{(p,\id_X)}"'] & & \cX \times_S X.
\end{tikzcd}
\]
The hypothesis on diagonal implies that $(p,\id_X)$ is quasi-compact and that $R$ is qcqs. Since $t$ is smooth, it is of finite presentation. Choose an \'{e}tale surjection\footnote{If $\cX$ has affine diagonal, then $R$ is a scheme of finite presentation over $X$, and we need not pass to $V$.} $V \twoheadrightarrow R$ with $V$ a scheme of finite presentation over $X$, and regard
\[
a: V \to R \xrightarrow{(s,t)} X \times_S X
\]
as a morphism over the second $X$ factor. By \cite[\href{https://stacks.math.columbia.edu/tag/0H41}{Tag 0H41}]{sp}, there is a finite stratification of $X$ over which formation of the scheme-theoretic image of $a$ commutes with arbitrary base change. Refining this stratification using generic flatness \cite[\href{https://stacks.math.columbia.edu/tag/052A}{Tag 052A}]{sp} and noetherian induction, we obtain finitely many integral locally closed strata $X_i$ such that
\[
V_i:=\schim \bigl( V \times_X X_i \to X \times_S X_i \bigr)
\]
is flat over $X_i$. Compatibility with arbitrary base change
is preserved under these refinements. Let $\cZ_i \subseteq \cX \times_S X_i$ be the scheme-theoretic image
of
\[
(p|_{X_i},\id_{X_i}): X_i \to \cX \times_S X_i.
\]
Scheme-theoretic images are unchanged by pre-composing with
a smooth surjection and, for quasi-compact morphisms, commute
with flat base change
\cite[\href{https://stacks.math.columbia.edu/tag/0CMJ}{Tag 0CMJ} and \href{https://stacks.math.columbia.edu/tag/0CMK}{Tag 0CMK}]{sp}. Hence
\[
\cZ_i \times_{\cX \times_S X_i}(X \times_S X_i)=V_i.
\]
Therefore, $\cZ_i \to X_i$ is flat by smooth descent.
It is of finite type and quasi-separated over the noetherian
scheme $X_i$, hence of finite presentation. The same comparison then shows that formation of $\cZ_i$ commutes with arbitrary base change over $X_i$. For any point $x \in X_i(K)$, the fibre $(\cZ_i)_x$ is the scheme-theoretic image of $p_x: \Spec K \to \cX_K$. In particular, by \cite[\href{https://stacks.math.columbia.edu/tag/0CML}{Tag 0CML}]{sp} it follows that $|(\cZ_i)_x|=\overline{\{p_x\}}$.
\end{proof}

\section{Proof of \Cref{thm:overview}}
\begin{proof}[Proof of \Cref{thm:overview}]
Choose an \'{e}tale surjection $S' \twoheadrightarrow S$ with $S'$ a noetherian scheme. Pullback preserves universal closedness and distinguishes open substacks, so we may and do assume that $S$ is a scheme.

Choose an atlas $p: X \to \cX$ with $X$ a scheme of finite type over $S$. Fix a finite stratification $X=\sqcup_i X_i$ and a family of closed substacks $\cZ_i \subseteq \cX \times_S X_i$ as in \Cref{lem:closure-stratification}. Let $\cU \subseteq \cX$ be an open substack such that $\cU \to S$ is universally closed. For each $i$, let $\cW_i:=\cZ_i \times_{\cX} \cU$ and consider the commutative diagram
\[
\begin{tikzcd}
\cW_i \ar[r] \ar[d,hook] \ar[ddd,bend right=4.5em,"q_i"'] & \cU \ar[dd,hook] \ar[ddd,bend left=4.5em,"\text{univ. closed}"] \\
\cZ_i \ar[d,hook] & \\
\cX \times_S X_i \ar[r] \ar[d] & \cX \ar[d] \\
X_i \ar[r] & S
\end{tikzcd}
\]
The projection $q_i: \cW_i \to X_i$ factors as the closed immersion $\cW_i \hookrightarrow \cU \times_S X_i$ followed by the base change of $\cU \to S$, so it is universally closed. It is also flat and locally of finite presentation, being the restriction of $\cZ_i\to X_i$ to an open substack. Hence $\im(q_i)$ is open and closed in the integral scheme $X_i$, and is therefore either empty or all of $X_i$.

To conclude, we claim that $\im(q_i)=p^{-1}(\cU) \cap X_i$ and hence $p^{-1}(\cU)$ is a union of strata. Indeed, for any point $x: \Spec K \to X_i$, let $p_x \in |\cX_K|$ be the point represented by the morphism $p \circ x$. By \Cref{lem:closure-stratification}, we have
\[
(\cW_i)_x \neq \varnothing \Leftrightarrow \overline{\{p_x\}} \cap \cU_K \neq \varnothing \Leftrightarrow p_x \in \cU_K,
\]
and the last equivalence follows because $\cU_K$ is open in $\cX_K$. The last condition is equivalent to $x \in p^{-1}(\cU)$, proving the claim.
\end{proof}
For quotient stacks $[X/G]$ over the field $\mathbb{C}$ of complex numbers, finiteness of opens with a proper moduli space can be alternatively deduced from \cite{BB98}.
\begin{lemma}
\label{lem:finite-proper-good-quotients}
Let $G$ be a reductive group over $\mathbb{C}$, and let $X$ be a complex variety with a $G$-action. Then there are only finitely many $G$-invariant open subsets $U \subseteq X$ that admits a proper good quotient $q_U: U \to Q_U$.
\end{lemma}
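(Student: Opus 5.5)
The plan is to deduce this from Bia{\l}ynicki-Birula's finiteness result in \cite{BB98}, as the remark in the introduction suggests, rather than from \Cref{thm:overview}. The reason is that \Cref{thm:overview} speaks of universally closed opens of $[X/G]$, and properness of $Q_U$ does not obviously give more than that. We could also simply reduce to \Cref{thm:overview} directly, and I will do so as the primary route. Set $\cX=[X/G]$ over $S=\Spec\mathbb C$. This is an algebraic stack of finite type with affine, hence qcqs, diagonal, because $G$ is affine and $X$ is a variety, so in particular separated.

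The key step is as follows. Let $U\subseteq X$ be $G$-invariant open with a proper good quotient $q_U:U\to Q_U$. Then $[U/G]\to Q_U$ is a good moduli space in the sense of Alper, since the properties ``affine $G$-invariant morphism with $\mathcal O_{Q_U}=(q_{U*}\mathcal O_U)^G$'' are exactly the good quotient axioms. By Alper's theorem, a good moduli space morphism is universally closed. Composing with the proper map $Q_U\to\Spec\mathbb C$, we see that $[U/G]\to\Spec\mathbb C$ is universally closed. Hence $[U/G]$ is one of the finitely many open substacks provided by \Cref{thm:overview}.

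It remains to check that $U\mapsto[U/G]$ is injective, which is immediate. Open substacks of $[X/G]$ correspond bijectively to $G$-invariant opens of $X$ via pullback along the atlas $X\to[X/G]$, and this atlas is surjective. Hence there are finitely many such $U$.

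The main obstacle is the identification of a classical good quotient with a good moduli space. Here I need to check that the good-quotient definition used in \cite{BB98} (affine, invariant, $\mathcal O_Q=(q_*\mathcal O_U)^G$, with the separation property on closed invariant sets) matches Alper's notion. Over $\mathbb C$ with $G$ reductive these agree, since linear reductivity makes invariants exact. If one insists on the route through \cite{BB98}, one would instead cite its finiteness of ``$G$-saturated open sets with good quotient which are maximal/proper'' via its description by finitely many orbit-closure configurations. The reduction above avoids that and is the route I would write.
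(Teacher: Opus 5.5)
Your proof is correct, but it takes a different route from the paper's. (Your opening paragraph first announces the \cite{BB98} route and then switches; the argument you actually write is the reduction to \Cref{thm:overview}.)

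\textbf{The paper's route.} The paper never invokes \Cref{thm:overview} here. It cites \cite[Thm. I]{BB98}: there are finitely many $G$-invariant opens with a separated good quotient that are maximal for saturated inclusion. It then shows that any $U$ with a proper good quotient is maximal. If $U\subseteq V$ is $q_V$-saturated and $Q_V$ is separated, then $Q_U\to Q_V$ is an open immersion that is also proper, so its image is clopen. Irreducibility of $Q_V$ (coming from the variety $X$) and saturation then force $U=V$.

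\textbf{Your route.} You treat the lemma as a special case of the ``in particular'' clause of \Cref{thm:overview}. Over $\mathbb C$, reductive groups are linearly reductive, so the classical good quotient $U\to Q_U$ yields a good moduli space $[U/G]\to Q_U$. Such a map is universally closed, hence so is $[U/G]\to\Spec\mathbb C$. The stack $[X/G]$ has affine diagonal, and $G$-invariant opens of $X$ correspond bijectively to open substacks of $[X/G]$. This is not circular, since the proof of \Cref{thm:overview} relies only on \Cref{lem:closure-stratification}.

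\textbf{What each approach buys.} Your argument needs no irreducibility of $X$, whereas the paper's clopen-image step genuinely uses it. It also works for adequate moduli spaces over any noetherian base, and it shows that each such $U$ is a union of the strata from \Cref{lem:closure-stratification}. What it gives up is the purpose the paper has for this lemma: an \emph{independent} confirmation of finiteness, via a different mechanism. That mechanism is that opens with proper good quotients are among Bia{\l}ynicki-Birula's maximal saturated opens. Your route also needs an input the paper's avoids, namely Alper's identification of good quotients with good moduli spaces and his universal-closedness theorem. You correctly flagged this as the main point to check.
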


\begin{proof}
By \cite[Thm. I]{BB98}, there are only finitely many $G$-invariant open subsets admitting separated good quotients
that are maximal with respect to saturated inclusion. It suffices to show that every open subset admitting a proper good quotient is maximal in this sense. 

Let $U \subseteq V \subseteq X$ be $G$-invariant open subsets with good quotients
\[
q_U: U \to Q_U \text{ and } q_V: V \to Q_V,
\]
such that $Q_U$ is proper, $Q_V$ is separated, and $U$ is saturated with respect to $q_V$, meaning that
\[
U=q_V^{-1}\bigl(q_V(U)\bigr).
\]
Then the restriction of $q_V$ to $U$ identifies $Q_U$ with the open subspace $q_V(U) \subseteq Q_V$. Thus the induced morphism $Q_U \to Q_V$ is an open immersion. It is also proper, since $Q_U$ is proper and $Q_V$ is separated. Its image is therefore both open and closed. Since $Q_V$ is irreducible, this image is all of $Q_V$, and saturation gives $U=V$.
\end{proof}
\begin{remark}
However, the na\"{i}ve generalization from quotient stacks to general stacks seems not to work, even if the stack is locally reductive: the existence of an \'{e}tale morphism from a stack to a stack with an adequate moduli space (which may even be proper) does not guarantee that the former admits an adequate moduli space. Even a representable open immersion gives a counterexample:
\[
\mathrm{B}\mathbb{G}_a \simeq [(\bbA^2 \setminus \{0\})/\SL_2] \hookrightarrow [\bbA^2/\SL_2].
\]
\end{remark}

\bibliographystyle{amsalpha}
\bibliography{references}

\end{document}